 %master.tex
\documentclass[12pt]{amsart}

\usepackage{latexsym}
\usepackage{amsfonts}
\usepackage{amsmath}
\usepackage{amsthm}
\usepackage{amssymb}
\usepackage{amscd}

\usepackage[english]{babel}
\setlength{\topmargin}{-7mm} \setlength{\oddsidemargin}{4.3mm}
\setlength{\evensidemargin}{0mm} \setlength{\textwidth}{6.0in}
\setlength{\textheight}{9.0in}
%---------------------------------------------------------------
\newcommand{\hot}{\hat{\otimes}}

\newcommand{\CC}{\mathbb{C}}
\newcommand{\RR}{\mathbb{R}}
\newcommand{\ZZ}{\mathbb{Z}}
\newcommand{\distr}{\mathcal{D}}
\newcommand{\smallGL}{GL_{n-1}(\RR)}
\newcommand{\GLtransp}{\widetilde{GL_{n-1}(\RR)}}
\newcommand{\raw}{\rightarrow}
\newcommand{\xx}{\mathfrak{X}}
%\newcommand{\s1}{\mathbb{S}^{1}}

% Theorem style
\newtheorem{thm}{Theorem}[section]
\newtheorem{thm*}{Theorem}
\newtheorem{lem}[thm]{Lemma}
\newtheorem{lem*}[thm*]{Lemma}
\newtheorem{prop}[thm]{Proposition}
\newtheorem{cor}[thm]{Corollary}
\newtheorem{prop*}[thm*]{Proposition}
\newtheorem{cor*}[thm*]{Corollary}

\newtheorem{defn*}[thm*]{Definition}
\newtheorem{thm-defn}[thm]{Theorem-Definition}

\theoremstyle{remark}

%%%%%%%%%%%%%%%%%%%%%%%%%%%%%%%%%%%%%%%%%%%%%%%%%%%%%%%%%%%%%%%%%%%%%%%%%%%%%%
\begin{document}
\author{Alexander Kemarsky}
\address{Mathematics Department, Technion - Israel Institute of Technology, Haifa, 32000 Israel}
\email{alexkem@tx.technion.ac.il}

\title{Distinguished representations of $GL(n,\CC)$}
%\begin{center}
% \LARGE{\textbf{Distinguished representations of $GL(n,\CC)$}}
%\end{center}
%\begin{center}
% Alexander Kemarsky
%\end{center}

\begin{abstract}
Let $V$ be a $GL_n(\RR)$-distinguished, irreducible, admissible representation of $GL_n(\CC)$.
 We prove that any continuous linear functional on $V$, which is invariant
under the action of the real mirabolic subgroup, is automatically $GL_n(\RR)$-invariant.
\end{abstract}

\date{\today}

\maketitle
\makeatletter
\@setabstract

\numberwithin{equation}{section}

\tableofcontents

\begin{section}{Introduction}
Let $G = GL(n,\CC)$, $H = GL(n,\RR)$ and $P \le H$ be the standard mirabolic
subgroup, i.e., the group consisting of matrices with last row equals to
$(0,0,...0,1)$. We identify $H \backslash G$ with the space of matrices
$$X =  \bigg\{ x \in G  \big| x \cdot \bar{x} = I_n \bigg\},$$
via the isomorphism $Hg \mapsto \bar{g}^{-1} \cdot g $ (for the proof of the surjectivity of
the map see Lemma \ref{matrix Hilbert 90}).
%Thus $X$ has a natural structure of a smooth manifold.
The group $G$ acts on $X$ by the twisted conjugation,
$x \rho(g) := \bar{g}^{-1}xg$.\newline
For a topological vector space $V$, we denote by $V^*$ the topological dual of $V$, i.e.,
the space of all continuous maps from $V$ to $\CC$.
Let
$$\distr(X) = C_c^{\infty}(X)^*$$ be the space of distributions on $X$.
%$$\distr(X) = \{f \in \big(C_c^{\infty}(X)\big)^* \big| f \text{ is continuous}  \}.$$
The space $\distr(X)$ is a topological space with the standard topology
($T_n \rightarrow T$ if $T_n(f) \raw T(f)$ for every $f \in C_c^{\infty}(X)$.) \newline
In this paper we work with the category of the admissible smooth Fr\'echet representations of moderate growth,
 see \cite[Section 11.5]{WallachB2}, see also \cite[Section 2.1]{AGS}. %, section 2.1).
 \newline
Let $(\pi,V)$ be a representation of $G$. The representation $(\pi,V)$ is called $H$-distinguished if there exists
a non-zero continuous linear map
$L:V \rightarrow \CC $, such that $$L(\pi(h)v) = L(v) \hspace{3mm} \forall v \in V, h \in H.$$
We denote the space of all such linear maps by $\big(V^* \big)^H.$ \newline
Let $V,W$ be two representations in our category. Then we denote by $V \hot W$ the completed tensor product with
the projective topology (this is the $\pi$-topology in \cite[Definitions 43.2 and 43.5]{Treves}). \newline
We prove the following theorem
\begin{thm}\label{main thm}
Let $(\pi,V)$ be an irreducible, admissible $H$-distinguished representation of $G$. Then
$$\big( V^* \big)^P = \big(V^* \big)^H. $$
\end{thm}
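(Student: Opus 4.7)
The plan is to translate the equality $(V^*)^P = (V^*)^H$ into a question about $G$-equivariant maps to spaces of smooth functions on homogeneous spaces, and then reduce to an orbit analysis on the symmetric space $X$. By Frobenius reciprocity in the admissible smooth Fr\'echet setting, one identifies
$$(V^*)^H \cong \mathrm{Hom}_G(V, C^\infty(X)), \quad (V^*)^P \cong \mathrm{Hom}_G(V, C^\infty(P \backslash G)),$$
where $L$ corresponds to the generalized matrix coefficient $v \mapsto (g \mapsto L(\pi(g)v))$; the inclusion $(V^*)^H \hookrightarrow (V^*)^P$ is then induced by pullback along $P \backslash G \twoheadrightarrow X$. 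Since $P$ is the stabilizer of the row vector $e_n^T$ under right multiplication, the ``last row'' map $Pg \mapsto e_n^T g$ identifies $P \backslash G$ with $\CC^n \setminus \{0\}$, and the image of $C^\infty(X)$ in $C^\infty(\CC^n \setminus \{0\})$ consists precisely of those functions that are constant on each real-form slice $\RR^n g \setminus \{0\}$ (the slices themselves being parametrized by $X$). Hence the theorem reduces to showing that any $G$-equivariant $\phi\colon V \to C^\infty(\CC^n \setminus \{0\})$ takes values in such slice-constant functions.

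To establish this I would fix a non-zero $L_0 \in (V^*)^H$ (which exists by $H$-distinguishedness), yielding a reference $\phi_0$ whose image already lies in the slice-constant subspace. Any hypothetical $L \in (V^*)^P \setminus (V^*)^H$ would then produce a non-zero $P$-equivariant distribution on $X$ supported on a proper $P$-invariant subset, since the ``defect'' measuring the failure of $L$ to be $H$-invariant vanishes on the open $P$-orbit where $L$ and the $H$-invariant extension of $L$ must agree by density. Classifying the $P$-orbits on $X$ under the conjugation action (which refine the $H$-orbits, themselves classified by a signature-type invariant of $x \bar{x} = I_n$), computing their stabilizers, and applying Bernstein's localization principle together with the irreducibility of $\pi$ should force this distribution to vanish.

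The main obstacle I anticipate is exactly this orbit-by-orbit analysis of $P$-equivariant distributions: boundary $P$-orbits on $X$ have non-reductive stabilizers and may a priori support distributions of non-trivial transversal order, so ruling these out requires careful filtration arguments in the style of Bernstein's ``$n$-homogeneity'' trick. I expect to carry this out by induction on $n$, the boundary orbits giving rise to analogous automatic-invariance problems for $GL_{n-1}(\CC)$-distinguished representations of a smaller Levi; the case $n = 1$ (where $P$ is trivial and $H = \RR^\times$) is a trivial base case.
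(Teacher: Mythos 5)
Your reduction is set up on a false identification. The subgroup $P$ in the theorem is the mirabolic of $H=GL_n(\RR)$, not of $G$: the stabilizer in $G$ of the row vector $e_n^{T}$ under right multiplication is the \emph{complex} mirabolic $P_n(\CC)$, which strictly contains $P$. Consequently the last-row map $Pg\mapsto e_n^{T}g$ is far from injective on $P\backslash G$ (its fibers are copies of $P\backslash P_n(\CC)$), and a dimension count makes the failure plain: $\dim_\RR P\backslash G = 2n^2-(n^2-n)=n^2+n$, while $\dim_\RR(\CC^n\setminus\{0\})=2n$. So the translation of $(V^*)^P$ into ``slice-constant functions on $\CC^n\setminus\{0\}$'' does not hold, and the whole first reduction collapses. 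A second structural problem: there is no open $P$-orbit on $X=H\backslash G$, since $\dim_\RR X=n^2>\dim_\RR P=n^2-n$; hence the step where ``$L$ and the $H$-invariant extension of $L$ must agree by density on the open $P$-orbit'' has nothing to stand on, and the ``defect distribution supported on a proper $P$-invariant subset'' is not produced.

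Even granting a corrected Frobenius-reciprocity-type reduction (which is parallel in spirit to the paper's Step 1, done there via Bernstein's map $A_\pi$, the distinguishedness of the contragredient, and Frobenius descent to pass from $\distr(G)^{H\times P}$ versus $\distr(G)^{H\times H}$ down to $\distr(X)^P$ versus $\distr(X)^H$), the heart of the matter is exactly the part you leave as a hope: proving that every $P$-invariant distribution on $X$ is $H$-invariant. Your plan — classify $P$-orbits, compute stabilizers, apply Bernstein localization and homogeneity filtrations, induct on $n$ — is the $p$-adic Gelfand--Kazhdan/Bernstein strategy, and in the archimedean setting it is not known to close by such means: transversal derivatives along non-open orbits are not controlled by localization alone, and the induction step you sketch (boundary orbits yielding distinction problems for $GL_{n-1}(\CC)$) is not set up. The paper avoids any orbit analysis: it covers $X$ by open $H$-stable charts $X_\lambda$, linearizes each by a Cayley transform onto an open subset of $i\cdot M_n(\RR)\simeq M_n(\RR)$, and then invokes the theorem of Aizenbud--Gourevitch (see also Sun--Zhu) that every $\smallGL$-invariant distribution on $M_n(\RR)$ is invariant under transposition; since $H$ is generated by $P$ and ${}^tP$, a $P$-invariant, transpose-invariant distribution is automatically $H$-invariant. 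That transpose-invariance theorem is a deep archimedean input (the key to the $(GL_{n+1},GL_n)$ multiplicity-one theorem), and your proposal would in effect need to reprove it; as written, the argument has an essential gap precisely there, in addition to the incorrect identification of $P\backslash G$ noted above.
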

The proof is based on section 3 of the article of Offen \cite{O}, see also \cite{Ok}. 
It is splitted into two steps as follows. \newline
Step 1: Reduction of the problem to a question on a single symmetric space $X$, defined above. More specificially,
we prove that it is enough to show that $\distr(X)^P = \distr(X)^H $. \newline
Step 2: We prove
\begin{prop}\label{main prop}
$\distr(X)^P = \distr(X)^H $.
 \end{prop}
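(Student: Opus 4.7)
The strategy, inspired by Section~3 of \cite{O}, is to stratify $X$ by its (finitely many) $H$-orbits under the twisted conjugation, prove the equality $\distr(\mathcal{O})^P = \distr(\mathcal{O})^H$ on each stratum, and then glue the pieces via the support filtration on $\distr(X)$. The first task is to establish the orbit decomposition explicitly: since $(G,H)$ is a symmetric pair under the Galois involution $g \mapsto \bar g$, classical results give a finite decomposition $X = \bigsqcup_{i=1}^{N} \mathcal{O}_i$ into locally closed Nash submanifolds. I would make this explicit by writing down a normal form for elements $x \in X$ (adapting Jordan form to the constraint $x \bar x = I$), together with orbit representatives $x_i$ and their $H$-stabilizers $H_{x_i}$.

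Next, for each orbit $\mathcal{O}_i$ I would verify by a direct matrix calculation the Lie-algebra identity $\mathfrak{p} + \mathfrak{h}_{x_i} = \mathfrak{h}$, asserting that the $P$-orbit through $x_i$ is open in $\mathcal{O}_i$; by connectedness this forces $P$ to act transitively on $\mathcal{O}_i$. Via Frobenius reciprocity on the homogeneous space this identifies both $\distr(\mathcal{O}_i)^P$ and $\distr(\mathcal{O}_i)^H$ with spaces of stabilizer-semi-invariant characters, and the equality reduces to a matching of modular characters of $P_{x_i} := P \cap H_{x_i}$ and $H_{x_i}$. To globalize, I would order the orbits so that $Z_k := \bigsqcup_{i \le k} \mathcal{O}_i$ is closed in $X$, obtaining a support filtration of $\distr(X)$ whose associated graded pieces, by the Aizenbud-Gourevitch-Sayag framework \cite{AGS} for distributions on Nash manifolds, have the form $\distr(\mathcal{O}_i, \mathrm{Sym}^m N^*_i)$, where $N^*_i$ is the conormal bundle of $\mathcal{O}_i$ in $Z_i$.

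The main obstacle is handling these transversal-derivative pieces at every order $m \ge 1$: unlike the non-Archimedean setting of \cite{O}, where only functions on orbits (and not their jets) appear, here the infinite tower of symmetric powers $\mathrm{Sym}^m N^*_i$ must be controlled. The orbit analysis of Step~2 reduces each such piece, via Frobenius reciprocity, to the representation-theoretic question whether $\bigl(\mathrm{Sym}^m N^*_i\bigr)^{P_{x_i}} = \bigl(\mathrm{Sym}^m N^*_i\bigr)^{H_{x_i}}$ for every $i$ and every $m$. This is the technically delicate heart of the argument and would be verified by induction on $m$, using the explicit action of $H_{x_i}$ on the tangent space at $x_i$ together with the fact, witnessed by Step~2, that $P_{x_i}$ is already ``large enough'' inside $H_{x_i}$ to control invariants at every order.
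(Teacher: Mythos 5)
There is a fundamental gap at the very first step: $X$ does \emph{not} decompose into finitely many $H$-orbits. The $H$-orbits on $X \simeq H\backslash G$ are the double cosets $H\backslash G/H$, and for the symmetric pair $(GL_n(\CC),GL_n(\RR))$ these carry continuous invariants (essentially the unimodular eigenvalues of $x$). Already for $n=1$, $X$ is the unit circle and $H=\RR^*$ acts trivially, so every point is an orbit. Hence there is no finite stratification, no support filtration with finitely many steps, and no way to ``glue'' as you propose. Your Step~2 is also false as stated: take $n=2$ and $x=\mathrm{diag}(\lambda,\mu)$ with $\lambda\neq\mu$ unimodular and non-real. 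Then $H_x$ consists of the real diagonal matrices, the $H$-orbit of $x$ is $2$-dimensional, while $P_x = P\cap H_x$ is $1$-dimensional and the $P$-orbit of $x$ is only $1$-dimensional; so $\mathfrak{p}+\mathfrak{h}_x\neq\mathfrak{h}$ and $P$ is not transitive (nor open) on the $H$-orbit. Worse, on such an orbit $\distr(\mathcal{O})^P$ is strictly larger than $\distr(\mathcal{O})^H$ (there is a continuum of $P$-orbits inside $\mathcal{O}$, hence many $P$-invariant distributions, but essentially one $H$-invariant one), so the desired equality simply cannot be proved orbit by orbit: it is an irreducibly global statement about $X$, exactly as in Bernstein's treatment of $P$-invariant distributions on $GL(n)$. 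The ``$\mathrm{Sym}^m$ of the conormal bundle'' discussion, which you flag as the delicate heart, is in any case left as an unverified claim, but the strategy collapses before reaching it.

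The paper's actual route avoids orbits entirely. One first observes that it suffices to show that every $\smallGL$-invariant distribution on $X$ is invariant under transposition (Proposition \ref{invariance to transpose}): since $\smallGL<P$, a $\xi\in\distr(X)^P$ is then also ${}^tP$-invariant, and $P$ together with ${}^tP$ generates $H$. The transpose-invariance is proved by covering $X$ by the open $H$-stable sets $X_\lambda=\{x\in X : \det(x-\lambda I_n)\neq 0\}$ (finitely many $\lambda$ suffice since $x$ has at most $n$ eigenvalues), identifying each $X_\lambda$ via the Cayley transform with the open subset $\xx_0\subseteq\xx\simeq M_n(\RR)$, and then invoking the Aizenbud--Gourevitch theorem (the archimedean analogue of Jacquet--Rallis, Theorem \ref{Ais-Gur thm}) that $GL_{n-1}(\RR)$-invariant distributions on $M_n(\RR)$ are transpose-invariant, together with a localization/extension-by-zero argument to pass from $\xx$ to $\xx_0$ and a standard covering lemma to pass from the $X_\lambda$ back to $X$. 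In other words, the hard transversal/jet analysis you worry about is not redone here; it is imported wholesale through the linearization and the quoted theorem, and that external input is exactly what your proposal is missing.
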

 Let $\xi \in \distr(X)^P$. Note that the transpose acts on $X$. Indeed, let $x \in X$. Then $x \bar{x} = I_n$.
 Let us take the transpose and bar on both sides of the last equality. Then we obtain $(^t x )(^t\bar{x})= I_n $.
 Since the image of $P$ under the map $g \mapsto ^t \bar{g}^{-1}$ is $^tP$ and since
  $^t(x\rho(g)) =^tx \rho(^t \bar{g}^{-1}), $
we obtain that $\xi \in \distr(X)^{^tP} $. \newline
 In order to prove Proposition \ref{main prop} we use an idea that was suggested by Gourevitch (in personal
 communication).
 Actually, we prove a more general result, namely the following
 \begin{prop} \label{invariance to transpose}
 Let $\xi \in \distr(X)^{\smallGL}$. Then $^t \xi = \xi$, i.e., $\xi$ is invariant under the transposition.
 \end{prop}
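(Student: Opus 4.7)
The plan is to use a Bernstein-type localization argument combined with a stratification of $X$ by $GL_{n-1}(\RR)$-orbits. Since $\bar{g}=g$ for $g \in GL_{n-1}(\RR)$, the twisted conjugation on $X$ restricted to $GL_{n-1}(\RR)$ is ordinary conjugation $x \mapsto g^{-1}xg$. Hence the statement says that any $GL_{n-1}(\RR)$-conjugation-invariant distribution on $X$ is also transpose-invariant. I would try to replicate the pattern of Offen's Section 3 argument, which is cited as the model for the whole paper.

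First, I would identify an open, dense, $GL_{n-1}(\RR)$-stable, transpose-stable subset $U \subseteq X$ of ``regular'' elements (typically those $x$ whose top-left $(n-1)\times(n-1)$ block together with $x$ itself satisfy generic regularity/separability conditions). The crucial geometric lemma to prove is that on $U$ the involution $x \mapsto {}^tx$ preserves every $GL_{n-1}(\RR)$-orbit. Concretely, for generic $x \in U$ one must exhibit $g = g(x) \in GL_{n-1}(\RR)$ with $g^{-1}xg = {}^t x$. The classical fact that $x$ and ${}^tx$ are always conjugate (they share Jordan form) provides an intertwining matrix in $GL_n(\CC)$, and the relation $x\bar x = I_n$ should let one arrange this intertwiner to be real symmetric; the regularity condition defining $U$ is what then forces the intertwiner to lie in the embedded $GL_{n-1}(\RR)$ (possibly after multiplying by an element of the centralizer of $x$).

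With orbit preservation established on $U$, a local slice or submersion argument shows that any $GL_{n-1}(\RR)$-invariant distribution on $U$ is automatically ${}^t(\cdot)$-invariant on $U$; equivalently, $\xi - {}^t\xi$ has support contained in the closed lower-dimensional complement $Z = X \setminus U$. The second half of the argument is then Bernstein's descent on $Z$: one stratifies $Z$ by $GL_{n-1}(\RR)$-orbit type, and on each stratum the normal-bundle analysis reduces the vanishing of $\xi - {}^t\xi$ to an analogous transpose-invariance statement for invariant distributions on a symmetric space attached to a smaller $GL_m(\CC)$. An induction on $n$ (with the base case $n=1$ trivial since $X$ is a circle on which $GL_0$ acts trivially and transposition is the identity) then closes the loop.

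The main obstacle I anticipate is the singular-locus analysis: concretely, identifying the stratification of $Z$, verifying that each stratum is a smooth $GL_{n-1}(\RR)$-invariant submanifold with well-behaved conormal bundle, and checking that the induced action on the symmetric part of the conormal fiber does match a smaller instance of the same problem so that induction applies. The orbit-preservation statement on the open part, by contrast, should be a direct linear-algebra computation once the correct regularity condition on $U$ is pinned down. A minor technical point will be ensuring that the ``generic'' open set $U$ is simultaneously stable under $GL_{n-1}(\RR)$, under transposition, and dense in $X$; taking $U$ cut out by the non-vanishing of an explicit polynomial in the matrix entries (for example, a suitable resultant/discriminant) should suffice.
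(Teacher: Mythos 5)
The decisive gap is in your second step. After your (correct) observation that the $\smallGL$-action on $X$ is ordinary conjugation, the statement you want is, after linearization, exactly the archimedean theorem of Aizenbud--Gourevitch and Sun--Zhu (Theorem \ref{Ais-Gur thm}, applied with $n-1$ in place of $n$): every $GL_{n-1}(\RR)$-conjugation-invariant distribution on $M_n(\RR)$ is transpose-invariant. Your plan --- generic orbit-preservation on an open dense $U$, then ``Bernstein descent'' on $Z=X\setminus U$ with a stratification and induction on $n$ --- is an attempt to reprove that theorem in a twisted setting, and the one-sentence treatment of the singular locus is precisely where all the known difficulty is concentrated. In the archimedean case a distribution supported on a closed stratum is not a distribution on that stratum (transversal derivatives enter), and, more seriously, the descent at the most singular points does not reduce to ``the same problem for a smaller $GL_m(\CC)$'': after Harish-Chandra-type descent one is left with equivariant distributions supported on a nilpotent cone, which cannot be excluded by dimension counting or by induction on $n$; the published proofs need Fourier transform and homogeneity arguments plus wave-front-set/D-module input (Aizenbud--Gourevitch) or the metrically-proper argument of Sun--Zhu. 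None of this machinery appears in your sketch, so the induction does not close. Even your open-locus step (choosing the intertwiner between $x$ and ${}^tx$ real symmetric and then pushing it into the embedded $\smallGL$ ``after multiplying by an element of the centralizer'') is asserted rather than proved, and over $\RR$ such real-form issues are exactly where orbit statements can fail without a genuine argument.

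For contrast, the paper never reproves any of this: it covers $X$ by the $n+1$ open $H$-stable pieces $X_{\lambda}=\{x\in X\mid \det(x-\lambda I_n)\neq 0\}$ with $\lambda\bar\lambda=1$, identifies each $X_\lambda$ with the open set $\xx_0=\{\theta\neq 0\}$ inside $\xx=i\,M_n(\RR)\simeq M_n(\RR)$ via the Cayley transform $x\mapsto (x+\lambda I_n)(x-\lambda I_n)^{-1}$, shows $\distr(\xx_0)^{\GLtransp,\chi}=0$ by multiplying a putative invariant distribution by a cutoff $f\circ\theta$ and extending by zero to all of $\xx$, where Theorem \ref{Ais-Gur thm} gives $\distr(\xx)^{\GLtransp,\chi}=0$, and then concludes $\distr(X)^{\GLtransp,\chi}=0$ from the finite-open-cover lemma for equivariant distributions. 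So the intended proof is a localization-plus-Cayley-transform reduction to a quoted black-box theorem; if you want to argue along your lines you must either also quote that theorem (at which point your stratification is unnecessary) or supply the substantial missing analysis at the nilpotent/singular strata.
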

 Note that Proposition \ref{invariance to transpose} implies Proposition \ref{main prop}.
 Indeed, let $\xi \in \distr(X)^P$. Since $\smallGL < P$ we have $\xi \in \distr(X)^{\smallGL}$.
 Thus, by Proposition \ref{invariance to transpose},
 the distribution $\xi$ is invariant under the transposition, and
  hence $\xi \in \distr(X)^{^tP} $. But $H$ is generated by groups $P$ and $^tP$. Hence $\xi \in \distr(X)^H$.
\newline
 This result is important since it allows us  to prove that the following two linear forms on the
Whittaker model of an irreducible, generic, and unitarizable distinguished representation $(\pi,V)$ defined by
$$W \mapsto \int_{U_n(\RR) \backslash P_n(\RR)}W(p)dp,  $$ and
$$W \mapsto \int_{U_n(\RR) \backslash P_n(\RR)}W\left( \left(
                                                   \begin{array}{cc}
                                                     0 & 1 \\
                                                     I_{n-1} & 0 \\
                                                   \end{array}
                                                 \right)
p \right)dp $$
are $GL_n(\RR)$-invariant. Here, $U_n(\RR)$ is the group of all upper triangular unipotent matrices of size $n \times n$.
\newline
The last result, combined with certain
multiplicity one theorems, allows us to recover properties of epsilon factors of distinguished representations.
\subsection*{Acknowledgements}
I would like to thank Prof. Omer Offen for posing me this question and providing many explanations
of the subject. \newline
I'm grateful to Prof. Dmitry Gourevitch for many fruitful discussions and
his help.
\newline
%I thank Prof. Moshe Baruch for continuously giving me useful advices. \newline
I also wish to thank Prof. Moshe Baruch, Dror Speiser and Tal Horesh for useful remarks.
\end{section}
%%%%%%%%%%%%%%%%%%%%%%%%%%%%%%%%%%%%%%%%%%%%%%%%%%%%%
%%%%%%%% End of the introduction
%%%%%%%%%%%%%%%%%%%%%%%%%%%%%%%%%%%%%%%%%%%%%%%%%%%%%%
\begin{section}{Notation and preliminaries}
We begin with the following simple lemma, which states the surjectivity of the map
$Hg \mapsto \bar{g}^{-1} \cdot g $.
\begin{lem}\label{matrix Hilbert 90}
Let $g \in G$ be a matrix such that $g \cdot \bar{g} = I_n$, where $I_n$ is the identity
matrix of size $n \times n$. Then there exists a matrix $h \in G$ such that
$g = \bar{h}^{-1} \cdot h$.
\end{lem}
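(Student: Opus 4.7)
The condition $g\bar{g}=I_n$ is precisely the statement that the map sending complex conjugation to $g$ is a $1$-cocycle of $\mathrm{Gal}(\mathbb{C}/\mathbb{R})$ with values in $GL_n(\mathbb{C})$, and the lemma is exactly the triviality of this cocycle, i.e.\ Hilbert's Theorem~90 for $GL_n$. My plan is to give a direct, elementary proof that mimics the usual averaging argument.

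The idea is to search for $h$ in the convenient $\mathbb{R}$-linear family $h_B := B + \bar{B}\,g$, where $B$ ranges over $M_n(\mathbb{C})$. First I would note that $g\bar{g}=I_n$ forces $\bar{g}g=I_n$ as well (since $\bar{g}$ must be the two-sided inverse of $g$), and then compute directly
\[
\bar{h}_B\, g \;=\; (\bar{B}+B\bar{g})\,g \;=\; \bar{B}g + B\bar{g}g \;=\; B+\bar{B}g \;=\; h_B.
\]
Hence the moment we can choose $B$ so that $h_B$ is invertible, rearranging $\bar{h}_B g = h_B$ gives $g = \bar{h}_B^{-1} h_B$, and we are done with $h := h_B$.

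The only remaining point, which I would spend the rest of the proof on, is producing some $B$ with $h_B \in GL_n(\mathbb{C})$. I would restrict to the one-parameter family $B_t := \tfrac{1+it}{2}\,I_n$ for $t \in \mathbb{R}$, which gives
\[
h_{B_t} \;=\; \tfrac{1+it}{2}\,I_n \;+\; \tfrac{1-it}{2}\,g \;=\; \tfrac{1-it}{2}\bigl(g - \lambda_t\, I_n\bigr),\qquad \lambda_t := -\tfrac{1+it}{1-it}.
\]
Since $\tfrac{1-it}{2}\neq 0$, invertibility of $h_{B_t}$ is equivalent to $\lambda_t$ not being an eigenvalue of $g$. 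As $t$ ranges over $\mathbb{R}$, the Cayley-type map $t\mapsto \lambda_t$ traces out an infinite subset of the unit circle (in fact all of $S^1\setminus\{1\}$), while $g$ has only finitely many eigenvalues; so some $t$ works and the corresponding $h := h_{B_t}$ gives the desired factorization.

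The whole proof is short and the only nontrivial ingredient is the observation that $h_B = B + \bar{B}g$ automatically satisfies $\bar{h}_B g = h_B$; the rest is the eigenvalue-avoidance step, which I expect to be entirely routine. An equally acceptable alternative would be to quote Hilbert~90 for $H^1(\mathrm{Gal}(\mathbb{C}/\mathbb{R}),GL_n(\mathbb{C}))=1$ directly, but the explicit argument above seems preferable in context.
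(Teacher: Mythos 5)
Your proof is correct and is essentially the paper's own argument: the paper takes $h=\bar{\lambda}I_n+\lambda g$, verifies $\bar{h}g=h$ using $\bar{g}g=I_n$, and picks $\lambda$ so that $-\bar{\lambda}/\lambda$ avoids the finitely many eigenvalues of $g$, which is exactly your family $h_{B_t}$ with $\lambda=\tfrac{1-it}{2}$. The only cosmetic difference is that you first introduce the more general family $B+\bar{B}g$ before specializing to scalars.
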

\begin{proof}
Let $\lambda \in \CC$ be a scalar, define $h = \bar{\lambda} I + \lambda g$.
One easily checks that $$\bar{h} \cdot g = \lambda g + \bar{\lambda} \cdot I = h.$$
Hence, if $h$ is invertible we get $g = \bar{h}^{-1} \cdot h$. Since $\lambda$ is arbitrary, we can choose it in a way, so that
$-\frac{\bar{\lambda}}{\lambda}$ is not an eigenvalue of $g$. This
finishes the proof.
\end{proof}
Consider an action of the $2$-element group $\ZZ_2$ on $H$ given by:
$h \mapsto {}^th^{-1} $. It induces the semidirect product
$$\widetilde{H} := H \rtimes \ZZ_2. $$
Define a character $\chi$ on $\tilde{H}$ by $\chi(g,n):=(-1)^n$. \newline
We view the group $GL_{n-1}(\RR)$ as a subgroup of $H$ - i.e. matrices of the form
\[
\left(
  \begin{array}{cc}
    A_{(n-1)\times(n-1)} & 0 \\
    0 & 1 \\
  \end{array}
\right).
\]
Similarly, define $\GLtransp := GL_{n-1}(\RR) \rtimes \ZZ_2$.
Clearly, this is a subgroup of $\widetilde{H}$. \newline
In the first stage of the proof we will use the following result (\cite[Theorem A.0.3]{AGS} )
\begin{thm} (\textbf{Frobenius descent})
Let $G$ be a Lie group, assume $G$ acts on two smooth manifolds $X$ and $Y$, where the action
on $Y$ is transitive.
Denote the actions by $\rho_1$ and $\rho_2$ respectively.
Let $p:X \twoheadrightarrow Y$ be
a smooth surjective map that commutes with the action of $G$, i.e.,
for all $g \in G$ the following diagram commutes:
\[
\begin{CD}
X @> \rho_1(g) >> X\\
@VV p V @VV p V\\
Y @> \rho_2(g) >> Y
\end{CD}
.
\]
For every $y \in Y$, let $G_y := Stab_G(y)$, the stabilizer of $y$ in $G$.
Then $$\distr(X)^G = \distr(p^{-1}(y))^{G_y}.$$
\end{thm}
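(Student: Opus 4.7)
The plan is to establish the isomorphism $\distr(X)^G \cong \distr(p^{-1}(y_0))^{G_{y_0}}$ by constructing a restriction map and an induction map and verifying they are mutually inverse. Fix a base point $y_0 \in Y$ and set $F := p^{-1}(y_0)$. Since the $G$-action on $Y$ is transitive, we identify $Y$ with $G/G_{y_0}$; combined with the $G$-equivariance of $p$, this forces $p$ to be a submersion, so $F$ is a closed embedded submanifold of $X$. Moreover, the orbit map $q : G \times F \to X$, $q(g,z) := \rho_1(g) z$, is a surjective submersion whose fibers are exactly the $G_{y_0}$-orbits for the action $h \cdot (g,z) = (g h^{-1}, \rho_1(h) z)$; hence $X$ is realized as the associated bundle $G \times_{G_{y_0}} F$. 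This is the basic geometric fact on which everything else rests.

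Next I would construct the restriction map $\alpha : \distr(X)^G \to \distr(F)^{G_{y_0}}$. Using the submersion structure of $p$, one can extend any test function $\phi \in C_c^\infty(F)$ to some $\tilde \phi \in C_c^\infty(X)$ with $\tilde\phi|_F = \phi$, and then set $\alpha(\xi)(\phi) := \xi(\tilde\phi)$. The nontrivial point is independence of the extension: any two extensions differ by a function vanishing on the whole fiber $F$, and the $G$-invariance of $\xi$, coupled with the transitive $G$-action on $Y$ which moves $F$ to every other fiber, forces $\xi$ to annihilate such differences. The $G_{y_0}$-invariance of $\alpha(\xi)$ is then immediate, since $G_{y_0}$ preserves $F$ and $\xi$ is $G$-invariant. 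For the inverse map $\beta : \distr(F)^{G_{y_0}} \to \distr(X)^G$, given $\eta \in \distr(F)^{G_{y_0}}$ and $f \in C_c^\infty(X)$, I would consider the function $\Phi_f : G \to \CC$, $\Phi_f(g) := \eta\bigl(z \mapsto f(\rho_1(g) z)\bigr)$. This function is smooth and right-$G_{y_0}$-invariant (by the $G_{y_0}$-invariance of $\eta$) and has support compact modulo $G_{y_0}$, so it descends to a compactly supported function on $Y = G/G_{y_0}$; integrating against a (quasi-)invariant measure on $Y$, with a modular-character correction if $Y$ is not unimodular, yields $\beta(\eta)(f)$. The left invariance of Haar measure on $G$ then gives $G$-invariance of $\beta(\eta)$ directly.

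The main obstacle is the well-definedness of $\alpha$: restricting a distribution to a closed submanifold is not a legitimate operation in general, and here both the submersion property of $p$ and the $G$-invariance of $\xi$ are essential — it is precisely the invariance under $G$-translates permuting all fibers that forces $\xi$ to kill differences of extensions. Once $\alpha$ and $\beta$ are in place, $\alpha \circ \beta = \mathrm{id}$ is a direct computation on the fiber $F$ (evaluating $\Phi_f$ at $g = e$ just records $\eta(f|_F)$), while $\beta \circ \alpha = \mathrm{id}$ encodes the principle that any $G$-invariant distribution on $X$ is determined by its restriction to a single fiber together with the transitive $G$-action on $Y$ used to propagate it to all of $X$. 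Careful bookkeeping with modular characters is the only genuinely subtle point beyond the definition of $\alpha$; once absorbed into $\beta$, the stated equality of invariant-distribution spaces follows.
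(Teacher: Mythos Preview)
The paper does not prove this theorem; it quotes it from \cite[Theorem~A.0.3]{AGS} and uses it as a black box. Your overall plan --- realize $X$ as the associated bundle $G\times_{G_{y_0}}F$ via the submersion $q\colon G\times F\to X$, $(g,z)\mapsto\rho_1(g)z$, and exhibit mutually inverse restriction and induction maps --- is indeed the standard route, and your induction map $\beta$ is essentially the correct one.

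Your restriction map $\alpha$, however, is not well defined as written. You set $\alpha(\xi)(\phi):=\xi(\tilde\phi)$ for an arbitrary extension $\tilde\phi\in C_c^\infty(X)$ of $\phi\in C_c^\infty(F)$ and assert that $G$-invariance of $\xi$ makes this independent of the choice of $\tilde\phi$. It does not. Take $G=\RR$ acting on $X=\RR^2$ by translation in the first coordinate, $Y=\RR$, $p$ the first projection, $y_0=0$, so $F=\{0\}\times\RR$. The distribution $\xi=dx\otimes\delta_0$ is $G$-invariant. Extending $\phi\in C_c^\infty(F)\cong C_c^\infty(\RR)$ as $\tilde\phi(x,t)=\chi(x)\phi(t)$ with $\chi\in C_c^\infty(\RR)$, $\chi(0)=1$, gives $\xi(\tilde\phi)=\bigl(\int_\RR\chi(x)\,dx\bigr)\cdot\phi(0)$, which depends on $\chi$. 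The point is that a test function vanishing on the single fiber $F$ need not lie in the closed span of differences $\rho_1(g)f-f$, so $G$-invariance alone does not force $\xi$ to annihilate it. (Your remark on $\alpha\circ\beta$ --- that ``evaluating $\Phi_f$ at $g=e$ records $\eta(f|_F)$'' --- is a symptom of the same problem: $\beta(\eta)(f)$ is an integral over $Y$, not a point evaluation.) The correct construction of $\alpha$ pulls $\xi$ back along the submersion $q$ (pullback of distributions along submersions is legitimate, being dual to fiberwise integration on test functions); left-$G$-invariance of $\xi$ then forces $q^*\xi$ to have the form $dg\otimes\eta$ for a unique $\eta\in\distr(F)$, and one sets $\alpha(\xi):=\eta$. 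With this correction, and the modular bookkeeping you already anticipate, the rest of your outline goes through.
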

Let $(\pi,V)$ be a continuous representation of the Lie group $G$ on a Fr\'echet space $V$.
Define the space $V^{(1)}$ of differentiable vectors in $V$ to be the set of all $v \in V$ such that
the derivative $$ \frac{d}{dt}\pi(\exp(tx))v \big|_{t=0}$$
exists for all $x \in \text{Lie}(G)$, where $\text{Lie}(G)$ is the Lie algebra of $G$.
The resulting vector is denoted by $\pi(x)v$. \newline
We now define inductively for $n\in \mathbb{N}$
$$V^{(n)}= \{ v \in
%\text{ the space of smooth vectors }
V^{(n-1)}
 \big| \pi(x)v \in V^{(n-1)}  \text{ for all } x \in \text{Lie}(G))   \} .$$
Define $$V^{\infty} = \bigcap_{n=1}^{\infty} V^{(n)}.$$
Let $C_c^{\infty}(G)$ be the space of smooth compactly supported functions on $G$.\newline
The following two theorems will be used in lemma \ref{Bernstein's lemma}.
%%%% Dixmier - Maliavin
\begin{thm} \label{Dixmier-Maliavin} (\cite{Cas}, \textbf{Dixmier-Maliavin})
Let $G$ be a Lie group and $(\pi,V)$ a continuous representation of $G$
on a Fr\'echet space. Then every $v \in V^{\infty}$ may be represented as a finite
linear combination $$v = \sum \pi(f_k)v_k, $$ where $f_k \in C_c^{\infty}(G)$
and $v_k \in V^\infty$, for all $k$.
\end{thm}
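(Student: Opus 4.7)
The plan is to reduce the theorem to the purely analytic \emph{factorization identity} that every $f \in C_c^\infty(G)$ is a finite linear combination of convolutions $f_k * g_k$ with $f_k, g_k \in C_c^\infty(G)$. This factorization is the main content of the original Dixmier--Maliavin paper. Granting it, the passage to a smooth vector is the softer half of the argument: given $v \in V^\infty$, one uses smoothness of the orbit map $g \mapsto \pi(g)v$ together with an approximate identity in $C_c^\infty(G)$ to locate a single $\phi \in C_c^\infty(G)$ and $w \in V^\infty$ with $v = \pi(\phi)w$, and then expanding $\phi = \sum_k f_k * g_k$ yields the desired finite decomposition
$$v = \pi(\phi)w = \sum_k \pi(f_k)\pi(g_k)w = \sum_k \pi(f_k) v_k, \qquad v_k := \pi(g_k)w \in V^\infty.$$

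For the factorization identity itself, I would localize. By a partition of unity subordinate to a cover of $G$ by exponential charts, it suffices to handle $f \in C_c^\infty(G)$ supported in a single chart, where convolution on $G$ differs from Euclidean convolution only by smooth Jacobian factors which are absorbed in the finite sum. This reduces the problem to the Euclidean statement that every $f \in C_c^\infty(\mathbb{R}^n)$ is a finite sum of convolutions of elements of $C_c^\infty(\mathbb{R}^n)$, which in turn reduces by Fubini (one variable at a time) to the one-dimensional case.

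The main obstacle is the one-dimensional factorization: every $f \in C_c^\infty(\mathbb{R})$ must be written as $\sum_k f_k * g_k$ with $f_k, g_k \in C_c^\infty(\mathbb{R})$. On the Fourier side, this amounts to expressing the Schwartz function $\hat{f}$ as a finite sum of products $\sum_k \widehat{f_k}\,\widehat{g_k}$, each factor Schwartz with compactly supported inverse transform. A single-product factorization fails because $\hat{f}$ can vanish to infinite order; instead one performs a dyadic decomposition in frequency space and, using Borel's theorem on prescribed Taylor series to manufacture smooth functions with any desired jet, constructs matched pairs of factors piece by piece with carefully controlled decay so that only finitely many pairs are needed. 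A final cut-off and smoothing step ensures the convolution factors can be taken compactly supported on the physical side, which is the technically delicate content of the theorem.
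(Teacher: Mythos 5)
The paper does not prove this statement at all: it is the classical Dixmier--Malliavin theorem, quoted from Casselman's notes, so the only meaningful comparison is with the original argument. Measured against that, your proposal has a genuine gap, and it is the central one: the step you call the ``softer half'' is circular. An approximate identity only gives $\pi(\phi_j)v \to v$; it does not produce a single $\phi \in C_c^{\infty}(G)$ and $w \in V^{\infty}$ with $v = \pi(\phi)w$ exactly. That exact one-term factorization is a statement at least as strong as the theorem you are proving (Dixmier--Malliavin themselves only obtain finite sums), and no soft argument is known to yield it; for Banach modules one could invoke Cohen--Hewitt factorization over $L^1(G)$, but that produces $f \in L^1(G)$ rather than $C_c^{\infty}(G)$ and does not extend to Fr\'echet spaces. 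Granting your convolution identity $\phi = \sum_k f_k * g_k$ only shows that G\aa{}rding vectors lie in $\pi(C_c^{\infty}(G))V^{\infty}$; the hard inclusion $V^{\infty} \subseteq \pi(C_c^{\infty}(G))V$ is untouched. The actual proof runs in the opposite direction: one proves a quantitative one-variable lemma (given the seminorm growth of $\pi(X)^{2n}v$, there exist $u, \tilde{u} \in C_c^{\infty}(\mathbb{R})$ and rapidly decreasing scalars $\lambda_n$ with $\delta_0 = \sum_n \lambda_n u^{(2n)} + \tilde{u}$ as distributions), applies it to $t \mapsto \pi(\exp(tX))v$ to get $v = \pi(u)w + \pi(\tilde{u})v$ with $w = \sum_n \lambda_n \pi(X)^{2n}v \in V^{\infty}$, and iterates over a basis of $\mathrm{Lie}(G)$ using coordinates of the second kind; the factorization of test functions is then a corollary of the theorem (applied to translation on $C^{\infty}_K$), not an ingredient of its proof.

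The analytic sketch for the factorization also contains errors. For $0 \neq f \in C_c^{\infty}(\mathbb{R})$ the transform $\hat{f}$ is entire by Paley--Wiener, so it cannot vanish to infinite order at any point; the obstruction to a one-term factorization lies in the zero distribution and decay of entire functions of exponential type, not where you place it. A dyadic frequency decomposition destroys compact support of the pieces, and the final ``cut-off and smoothing'' would destroy the exact convolution identity, which is precisely the delicate point. Likewise, group convolution in an exponential chart is governed by the Baker--Campbell--Hausdorff law, not by Euclidean convolution up to a Jacobian, so a partition of unity does not reduce the problem on $G$ to $\mathbb{R}^n$; and since a general element of $C_c^{\infty}(\mathbb{R}^n)$ is not a finite sum of tensor products, the ``Fubini, one variable at a time'' reduction again needs the representation-theoretic form of the theorem you are trying to establish.
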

\begin{thm} (\cite[Thm 51.6,(c)]{Treves})
Let $K \subseteq \RR^m$ and $L \subseteq \RR^n$ be compact sets. Then
$$C_c^{\infty}(K) \hot C_c^{\infty}(L) \cong C_c^{\infty}(K \times L). $$
\end{thm}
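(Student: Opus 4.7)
The plan is to construct the natural bilinear map $B : C_c^{\infty}(K) \times C_c^{\infty}(L) \to C_c^{\infty}(K \times L)$ defined by $B(f,g)(x,y) = f(x)g(y)$, check it is jointly continuous, and invoke the universal property of the projective tensor product to obtain a continuous linear extension $\Phi : C_c^{\infty}(K) \hot C_c^{\infty}(L) \to C_c^{\infty}(K \times L)$. Joint continuity of $B$ follows from the Leibniz rule, which for every $N$ yields a constant $c_N$ with $\|fg\|_{C^N(K \times L)} \le c_N \|f\|_{C^N(K)} \|g\|_{C^N(L)}$. The goal is then to show that $\Phi$ is a topological isomorphism.

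I would next establish that the image of $\Phi$ is dense. Given $\phi \in C_c^{\infty}(K \times L)$, the strategy is to enlarge slightly to compacts $K' \supset K$, $L' \supset L$, cover $K' \times L'$ by a box, and expand $\phi$ in a Fourier series on that box. Rapid decay of the Fourier coefficients, guaranteed by smoothness of $\phi$, ensures convergence of the partial sums in every $C^N$ seminorm; each partial sum is a finite sum of elementary tensors. Multiplying by a fixed smooth cutoff supported in $K' \times L'$ that equals $1$ on a neighborhood of $K \times L$ turns this into an approximating sequence lying in $C_c^{\infty}(K) \otimes C_c^{\infty}(L)$, yielding density.

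The main obstacle is to show that $\Phi$ is a topological embedding, not merely a continuous bijection onto a dense subspace; equivalently, one must control the projective seminorms of an arbitrary $\phi \in C_c^{\infty}(K \times L)$ by its $C^N$ seminorms. This is where the nuclearity of $C_c^{\infty}(K)$ is essential. Concretely, I would refine the Fourier expansion above into a representation $\phi = \sum_n \lambda_n f_n \otimes g_n$ with $\sum_n |\lambda_n| < \infty$ and each $\|f_n\|_{C^N}, \|g_n\|_{C^N}$ bounded by a single seminorm of $\phi$; such a decomposition directly bounds the projective seminorms in terms of a $C^N$ seminorm and hence gives continuity of $\Phi^{-1}$ on the image. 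Packaging this estimate uniformly across the whole family of projective seminorms is the technical heart of the argument, and I would appeal to the standard fact that $C_c^{\infty}(K)$ is nuclear (proved via eigenfunction expansions for a suitable elliptic operator on an enclosing torus), or equivalently to the Schwartz kernel theorem, to obtain it cleanly. Once $\Phi$ is a continuous bijection, the open mapping theorem for Fr\'echet spaces finishes the proof.
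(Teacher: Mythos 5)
The paper itself does not prove this statement at all---it simply cites \cite[Thm 51.6]{Treves}---so your proposal has to stand on its own, and as written it has a genuine gap: the support constraint. Both your density step and the later ``summable representation'' $\phi=\sum_n\lambda_nf_n\otimes g_n$ are produced by multiplying Fourier partial sums on an enclosing box by a cutoff that equals $1$ on a neighborhood of $K\times L$ and is supported in the larger set $K'\times L'$. A smooth function equal to $1$ near the (nonempty, non-open) compact $K\times L$ cannot have support inside $K\times L$, so the resulting elementary tensors have factors in $C_c^{\infty}(K')$ and $C_c^{\infty}(L')$, \emph{not} in $C_c^{\infty}(K)$ and $C_c^{\infty}(L)$. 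Hence your argument establishes at best the inclusion $C_c^{\infty}(K\times L)\subseteq C_c^{\infty}(K')\hot C_c^{\infty}(L')$ for slightly enlarged compacts---which, incidentally, is all the paper actually needs for Corollary \ref{2 variables tensor inc}---but it does not prove density of $C_c^{\infty}(K)\otimes C_c^{\infty}(L)$ in $C_c^{\infty}(K\times L)$, nor surjectivity of $\Phi$ onto it, which is what the stated isomorphism requires. To respect supports one needs a different device, e.g.\ identifying $C_c^{\infty}(K\times L)$ with the smooth $C_c^{\infty}(L)$-valued functions on $\RR^m$ supported in $K$ (expand $\phi(x,\cdot)$ along a Schauder basis of $C_c^{\infty}(L)$, so the coefficient functions are automatically supported in $K$), together with the equality of the $\pi$- and $\varepsilon$-tensor products for nuclear Fr\'echet spaces; this is essentially the route taken in \cite{Treves}.

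A second, smaller gap: you never argue that $\Phi$ is injective, yet both ``topological embedding'' and ``continuous bijection plus open mapping theorem'' presuppose it. Injectivity of the canonical map from $E\hot F$ (projective) into a space of functions is not automatic for general Fr\'echet spaces; it is exactly here that nuclearity (or at least the approximation property) of $C_c^{\infty}(K)$ is used, via the injectivity of $E\hot_{\pi}F\to E\hot_{\varepsilon}F$ and the realization of the $\varepsilon$-product inside vector-valued functions. In your write-up nuclearity is invoked only for the seminorm estimates, so this point is left open; once injectivity and a support-respecting surjectivity argument are in place, the open mapping theorem for Fr\'echet spaces does finish the proof as you say.
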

\begin{cor}\label{2 variables tensor inc}
Let $G = GL_n(\CC) \subseteq \RR^{2n^2}$. Then
$$C_c^{\infty}(G \times G) \subseteq C_c^{\infty}(G) \hot C_c^{\infty}(G).$$
\end{cor}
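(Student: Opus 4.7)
The plan is to localize supports and reduce directly to the preceding theorem. Given $f \in C_c^{\infty}(G \times G)$, let $K_1, K_2 \subseteq G$ denote the projections of $\mathrm{supp}(f)$ onto the two factors; these are compact, and since $G$ is open in $\RR^{2n^2}$ I may view $K_1, K_2$ as compact subsets of $\RR^{2n^2}$. Because $\mathrm{supp}(f) \subseteq K_1 \times K_2$, the function $f$ belongs to $C_c^{\infty}(K_1 \times K_2)$ in the sense of Treves.

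Applying the preceding theorem with these $K_1, K_2$ yields the isomorphism
\[
C_c^{\infty}(K_1 \times K_2) \cong C_c^{\infty}(K_1) \hot C_c^{\infty}(K_2),
\]
so $f$ corresponds to some element $F$ of this projective tensor product. To lift $F$ to $C_c^{\infty}(G) \hot C_c^{\infty}(G)$, I would use extension by zero to produce continuous linear inclusions $C_c^{\infty}(K_i) \hookrightarrow C_c^{\infty}(G)$, and then invoke functoriality of the completed projective tensor product to obtain a continuous linear map
\[
C_c^{\infty}(K_1) \hot C_c^{\infty}(K_2) \longrightarrow C_c^{\infty}(G) \hot C_c^{\infty}(G).
\]
The image of $F$ under this map will be the sought-after element representing $f$.

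The one subtle point, and the main obstacle, is that for the corollary to assert a genuine containment of spaces (rather than the mere existence of a lift), the induced map above must be injective. On the algebraic tensor product injectivity is immediate from the injectivity of the two inclusions; the upgrade to the completion is where nuclearity enters. Each Fr\'echet space $C_c^{\infty}(K_i)$ is nuclear, so its projective tensor product coincides with the injective ($\varepsilon$) tensor product, and the latter is well-known to preserve closed-subspace inclusions of locally convex spaces. This identifies $C_c^{\infty}(G \times G)$ with a subspace of $C_c^{\infty}(G) \hot C_c^{\infty}(G)$, establishing the corollary.
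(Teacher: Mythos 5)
Your argument is correct and follows essentially the same route as the paper: bound the support of $f$ by a product of compact subsets of $G$, invoke the quoted isomorphism $C_c^{\infty}(K_1\times K_2)\cong C_c^{\infty}(K_1)\hot C_c^{\infty}(K_2)$, and then include into $C_c^{\infty}(G)\hot C_c^{\infty}(G)$. The only difference is that you justify (via nuclearity and the $\varepsilon$-tensor product) why this last inclusion of completed tensor products is injective, a step the paper simply asserts.
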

\begin{proof}
Let $f \in C_c^{\infty}(G \times G)$. Then $supp(f)$ is a compact set in $G \times G$, namely,
there exist compact sets $K \subseteq \RR^{2n^2}$ and $L \subseteq \RR^{2n^2}$, such that
$supp(f) \subseteq K \times L$.
Hence, $$f \in  C_c^{\infty}(K \times L) \cong C_c^{\infty}(K) \hot C_c^{\infty}(L).$$
But $C_c^{\infty}(K) \hot C_c^{\infty}(L) \subseteq C_c^{\infty}(G) \hot C_c^{\infty}(G)$ and we obtain
that $f \in C_c^{\infty}(G) \hot C_c^{\infty}(G) $.
\end{proof}
Let $M$ be a smooth manifold. Denote by $S(M)$ the Fr\'echet space of Schwartz functions
on $M$. Denote by $$S^*(M):= \big( S(M) \big)^*$$ the space of Schwartz distributions on $M$. \newline
In the second part of the proof Theorem \ref{main thm}, we use the following theorem of
Aizenbud and Gourevitch, see further \cite[Propositions 3.1.3, 3.1.4 and Theorem A]{AG}, see also \cite{SunZhu}.
\begin{thm}\label{Ais-Gur thm}
Any $GL_n(\RR)$-invariant distribution on $M_{n+1}(\RR)$ is invariant
under the transposition.
\end{thm}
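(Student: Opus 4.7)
The plan is to apply the Gelfand--Kazhdan method in its Archimedean form, combined with the generalized Harish-Chandra descent of Aizenbud--Gourevitch. Let $G = GL_n(\RR)$ act on $M_{n+1}(\RR)$ by conjugation via the embedding $g \mapsto \mathrm{diag}(g,1)$, let $\sigma$ denote the transpose involution, and set $\widetilde{G} = G \rtimes \ZZ_2$ with $\ZZ_2 = \langle \sigma \rangle$. Let $\chi$ be the sign character on $\widetilde{G}$. The theorem is then equivalent to the vanishing statement
\[
\distr(M_{n+1}(\RR))^{\widetilde{G},\chi} = 0,
\]
since a $G$-invariant distribution $\xi$ splits, according to the $\ZZ_2$-action, into a $\sigma$-invariant piece and a $\sigma$-antiinvariant piece, and the second must vanish.

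The first step is to stratify $M_{n+1}(\RR)$ by $\widetilde{G}$-orbits and invoke Bernstein's localization principle in its Archimedean version (cf.\ \cite{AG}): it suffices to check that every $\widetilde{G}$-invariant, $\chi$-eigen distribution supported on the closure of a single orbit vanishes. At each point $x$, a smooth analogue of Luna's slice theorem reduces the problem to the study of $G_x$-equivariant distributions on a normal slice $N_x$ to the orbit through $x$, where $G_x = \mathrm{Stab}_G(x)$. For most representative orbits, the descended pair has the form $(GL_{n-k}, M_{n-k+1})$ for some $k \ge 1$, so an induction on $n$ handles these cases; for orbits of regular semisimple elements, the stabilizer is a torus and the action of $\sigma$ is tame enough that the descended statement is essentially immediate.

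The main obstacle is the base case at the origin $x = 0$, where descent does not decrease dimension. Here one uses the Fourier transform with respect to the $G$-invariant pairing $(X,Y) \mapsto \mathrm{tr}(XY)$ on $M_{n+1}(\RR)$. This Fourier transform intertwines the $G$-action with itself and commutes (up to sign) with $\sigma$, hence preserves the space of $\chi$-equivariant distributions. A singular-support/wave-front analysis in the style of \cite{AG} forces both a $\widetilde{G}$-invariant $\chi$-eigen distribution and its Fourier transform to be supported in the closed set of non-regular (or nilpotent-type) matrices; combining this with the inductive result on the complement of the origin forces the distribution to vanish.

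The technical heart of the argument, and the step I expect to be the main obstacle, is precisely this singular-support estimate at the origin: showing that a $\widetilde{G}$-invariant $\chi$-eigen distribution supported at $\{0\}$, together with a similarly supported Fourier transform, must be zero. The geometric input is that the zero matrix has no $\sigma$-distinguishing neighborhood structure that survives both localization and Fourier transform, while the analytic input is the wave-front machinery that lets these two geometric constraints be combined.
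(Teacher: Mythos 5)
You should first note what the paper actually does here: it does not reprove this statement at all. It is quoted as the theorem of Aizenbud and Gourevitch (\cite[Propositions 3.1.3, 3.1.4 and Theorem A]{AG}, see also \cite{SunZhu}), and the only argument the paper itself supplies is the bridge from the Schwartz-distribution vanishing $S^*(M_{n+1}(\RR))^{\widetilde{GL_n(\RR)},\chi}=0$ (obtained from \cite[Propositions 3.1.5 and Theorem 3.1.1]{AG}) to the statement for arbitrary distributions, via \cite[Theorem 4.0.2]{AG2}. Your proposal instead tries to prove the cited theorem from scratch, and what you have written is essentially a compressed outline of the Aizenbud--Gourevitch/Sun--Zhu strategy: the Gelfand--Kazhdan reformulation as $\distr(M_{n+1}(\RR))^{\widetilde{GL_n(\RR)},\chi}=0$, generalized Harish-Chandra descent with induction on $n$, and a Fourier transform/wave-front argument. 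The architecture is the right one, but as a proof it has a genuine gap precisely where you place the ``technical heart.''

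The descent and induction do \emph{not} reduce you to equivariant distributions supported at the origin. A distribution supported at $\{0\}$ whose Fourier transform is also supported at $\{0\}$ is trivially zero (finite combinations of derivatives of $\delta$ versus polynomials), so if that were the residual case there would be nothing left to prove. What the reduction actually leaves is $\chi$-equivariant distributions supported on a positive-dimensional nilpotent-type set: in the decomposition $M_{n+1}(\RR)\simeq M_n(\RR)\oplus\RR^n\oplus(\RR^n)^*\oplus\RR$ the critical case is distributions on $\RR^n\times(\RR^n)^*$ supported, together with their Fourier transforms, on the cross $\{(v,\phi):\phi(v)=0\}$. Killing these is the bulk of \cite{AG} (homogeneity/``special distribution'' arguments using the Weil representation) and of \cite{SunZhu} (their alternative criterion), and your sketch neither names nor supplies this ingredient. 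Two further points: the descendants at semisimple points are not only pairs of the form $(GL_{n-k}(\RR),M_{n-k+1}(\RR))$ --- complex eigenvalues of the stabilizer produce factors of the analogous problem over $\CC$, so the induction must be formulated for both $\RR$ and $\CC$ as in \cite{AG}; and your Fourier-transform and wave-front steps only make sense for tempered (Schwartz) distributions, which is exactly why both \cite{AG} and the paper prove the Schwartz statement first and then invoke \cite[Theorem 4.0.2]{AG2} to pass to all of $\distr(M_{n+1}(\RR))$ --- a step your outline, which works with $\distr$ throughout, would also need.
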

In fact, Theorem \ref{Ais-Gur thm} implies that $\distr(M_{n+1}(\RR))^{\widetilde{GL_{n}(\RR)},\chi} = 0$. 
Proposition 3.1.5 in \cite{AG} together with Theorem 3.1.1 imply that
$$S^{*}(M_{n+1}(\RR))^{\widetilde{GL_{n}(\RR )},\chi} = 0.$$
It follows that a $GL_n(\RR)$-invariant Schwartz distribution on $M_{n+1}(\RR)$
is also invariant under the transposition.
Then the implication
$$\big( S^{*}(M_{n+1}(\RR))^{\widetilde{GL_{n}(\RR)},\chi} = 0 \big)  \Rightarrow
\big( \distr(M_{n+1}(\RR))^{\widetilde{GL_{n}(\RR)},\chi} = 0 \big),$$
which follows from  \cite[Theorem 4.0.2]{AG2}, implies that a 
$GL_n(\RR)$-invariant distribution on $M_{n+1}(\RR)$
is also invariant under the transposition.
\end{section}
\begin{section}{Proposition \ref{main prop} implies theorem \ref{main thm}}
\begin{lem}\label{Bernstein's lemma}
Let $(\pi,V)$ be an irreducible, admissible representation of $G$. Let $(\tilde{\pi},\tilde{V})$ be
the contragredient of $(\pi,V)$. Then there exists a morphism
of $G \times G$-modules with a dense image
$$A_{\pi}: C_c^{\infty}(G) \rightarrow V \hat{\otimes} \tilde{V}. $$
\end{lem}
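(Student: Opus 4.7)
The plan is to take
\[
A_\pi(f):=\pi(f)=\int_G f(g)\pi(g)\,dg,
\]
viewed as an element of $V\hot\tilde V$ through the canonical embedding of $V\hot\tilde V$ into the space of continuous operators on $V$ that sends an elementary tensor $v\otimes\tilde v$ to the rank-one operator $u\mapsto\langle u,\tilde v\rangle v$. The first thing to verify is that $\pi(f)$ actually lies in the image of this embedding: because $f$ is smooth with compact support and $(\pi,V)$ is admissible, $\pi(f)$ is a smoothing, nuclear operator and therefore corresponds to an element of the projective completion $V\hot\tilde V$. Continuity of $f\mapsto\pi(f)$ in the appropriate topologies is a standard convolution estimate.

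Next I would check $G\times G$-equivariance. With the action $((g_1,g_2)\cdot f)(x)=f(g_1^{-1}xg_2)$ on $C_c^\infty(G)$ and the diagonal action $(g_1,g_2)\cdot(v\otimes\tilde v)=\pi(g_1)v\otimes\tilde\pi(g_2)\tilde v$ on $V\hot\tilde V$, the substitution $y=g_1^{-1}xg_2$ in the defining integral yields $\pi((g_1,g_2)\cdot f)=\pi(g_1)\circ\pi(f)\circ\pi(g_2^{-1})$, which matches the tensor action once one unwinds the operator identification. To prove density, observe that $A_\pi$ is non-zero: taking $f_n\in C_c^\infty(G)$ to be an approximate identity at $e$ gives $\pi(f_n)v\to v$ for every $v\in V$, so $A_\pi(f_n)\neq 0$ for large $n$. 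The closure of the image is then a non-zero closed $G\times G$-invariant subspace of $V\hot\tilde V$. Since $(\pi,V)$ is irreducible and admissible, the underlying Harish-Chandra module $V_K\otimes\tilde V_K$ is irreducible by Schur's lemma, and topological irreducibility of $V\hot\tilde V$ as a $G\times G$-representation follows via the Casselman--Wallach equivalence, forcing the closure to equal the whole space.

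The main obstacle is formalizing these two soft-analytic inputs, namely the nuclearity of $\pi(f)$ and the topological irreducibility of $V\hot\tilde V$. Both are genuine facts about the Casselman--Wallach category that one would prefer to cite rather than derive from scratch. A more hands-on alternative for the density step would be to combine Theorem \ref{Dixmier-Maliavin} with Corollary \ref{2 variables tensor inc}: after writing a generic element of $V\hot\tilde V$ as a projective sum $\sum v_i\otimes\tilde v_i$ and decomposing each $v_i=\sum_j\pi(\phi_{ij})v'_{ij}$ via Dixmier--Malliavin, one uses Corollary \ref{2 variables tensor inc} to pass between bivariate kernels in $C_c^\infty(G\times G)$ and tensor products in $C_c^\infty(G)\hot C_c^\infty(G)$, reconstructing the approximating operators $\pi(f)$ by contracting against matrix coefficients; this avoids the topological Schur argument but is substantially more cumbersome.
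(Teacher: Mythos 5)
Your proposal follows essentially the same route as the paper: define $A_\pi(f)=\pi(f)$, identify it with an element of $V\hot\tilde V$ via nuclearity and smoothness of the kernel, check $G\times G$-equivariance and non-vanishing, and deduce density from the topological irreducibility of $V\hot\tilde V$ as a $G\times G$-module (which the paper simply cites from \cite{AG}). The only real difference is bookkeeping: the ``more cumbersome alternative'' you sketch (Dixmier--Malliavin combined with Corollary \ref{2 variables tensor inc}) is precisely how the paper justifies that $\pi(f)$ lands in $V\hot\tilde V$ rather than merely in $\mathrm{Hom}(V,V)\simeq V\hot V^{*}$, not a substitute for the irreducibility argument used for density.
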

\begin{cor}
The dual map $$A_{\pi}^* : V^{*} \hat\otimes \tilde{V}^* \raw \distr(G)$$
is an injective morphism of $G \times G$-modules.
\end{cor}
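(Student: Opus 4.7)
The plan is to handle the two assertions of the corollary separately: $G \times G$-equivariance by formal dualization, and injectivity through a two-step factorization that combines the density statement in Lemma \ref{Bernstein's lemma} with standard facts about projective tensor products.

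First, I would dispatch the equivariance. Since $A_\pi$ intertwines the left--right translation action of $G \times G$ on $C_c^\infty(G)$ with the outer tensor action of $\pi \hot \tilde\pi$ on $V \hot \tilde{V}$, the transpose automatically intertwines the contragredient actions on $\distr(G) = C_c^\infty(G)^*$ and on $V^* \hot \tilde{V}^*$. This step is a formal diagram chase and requires no new input beyond Lemma \ref{Bernstein's lemma}.

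For injectivity, my approach is to factor $A_\pi^* = T \circ \Phi$ through the topological dual of the tensor product. Here
\[
\Phi : V^* \hot \tilde{V}^* \longrightarrow (V \hot \tilde{V})^*
\]
is the canonical map sending a decomposable tensor $v^* \otimes \tilde{v}^*$ to the functional $v \otimes \tilde{v} \mapsto \langle v^*, v \rangle \langle \tilde{v}^*, \tilde{v} \rangle$, extended by continuity via the universal property of the $\pi$-topology, and
\[
T : (V \hot \tilde{V})^* \longrightarrow \distr(G), \qquad T(\eta) := \eta \circ A_\pi,
\]
is pullback along $A_\pi$. Injectivity of $T$ is immediate from the density clause in Lemma \ref{Bernstein's lemma}: any $\eta$ annihilating the image of $A_\pi$ vanishes on a dense subspace of $V \hot \tilde{V}$ and hence identically by continuity.

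The main obstacle is justifying the injectivity of $\Phi$. For this I would appeal to the functional-analytic structure of the category at hand: admissible smooth Fr\'echet representations of moderate growth yield well-behaved (in particular nuclear) topological vector spaces, for which the canonical map from the projective tensor product of the duals into the dual of the projective tensor product is known to be injective; a reference along the lines of \cite{Treves} suffices. Combining the two injectivities yields the injectivity of $A_\pi^* = T \circ \Phi$, completing the proof.
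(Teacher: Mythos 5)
Your proposal is correct and takes essentially the same route as the paper: the corollary is deduced by dualizing the map $A_{\pi}$ of Lemma \ref{Bernstein's lemma}, with injectivity coming from the density of its image and the $G \times G$-equivariance being formal. Your additional step of factoring $A_{\pi}^*$ through the canonical map $V^* \hot \tilde{V}^* \rightarrow (V \hot \tilde{V})^*$ and invoking nuclearity for its injectivity simply makes explicit an identification the paper leaves implicit, resting on the same nuclear Fr\'echet space theory it already cites (e.g.\ \cite[Lemma A.0.8]{AG} and \cite{Treves}).
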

\begin{proof}
The proof follows \cite[pp. 76-77]{Ber}.
We define the map $A_{\pi}$ as a composition
$$A_{\pi}:C_c^{\infty}(G) \raw Hom(V,V) \simeq V \hat\otimes V^{*}, $$
where the isomorphism $Hom(V,V) \simeq V \hot V^*$ follows from the standard theory of nuclear Fr\'echet spaces,
see, for example, \cite[Lemma A.0.8]{AG}. \newline
The map $C_c^{\infty}(G) \raw Hom(V,V)$ is given by
$$f \mapsto \big(v \mapsto \pi(f)v := \int_{G} f(g) (\pi(g)v) dg \big).$$
Since $f \in C_c^{\infty}(G)$, it follows that the image of $A_{\pi}$ is
contained in $(V \hot V^*)^{\infty}$. By the Dixmier-Malliavin theorem (Theorem \ref{Dixmier-Maliavin}), the space of smooth
vectors in $V \hot V^*$ is a subspace of $V \hot \tilde{V} $.
Indeed, let $v \in (V \hot V^*)^{\infty}$. Then by the Dixmier-Malliavin theorem,
$$v = \sum_{i=1}^{k} \pi(F_i)v_i, $$
where $v_i \in V \hot V^*$ and $F_i \in C_c^{\infty}(G \times G)$.
Since $C_c^{\infty}(G \times G)(V \hot V^*) \subseteq V \hot \tilde{V}$, we have that
$v \in V \hot \tilde{V}$. \newline
To prove the inclusion $C_c^{\infty}(G \times G)(V \hot V^*) \subseteq V \hot \tilde{V}$ one can
argue as follows.
We know that for $f \in C_c^{\infty}(G),v \in V$ and $v' \in V^*$ one has that $\pi(f)v \in V$ and
$\pi(f)v' \in \tilde{V}$. By taking a tensor product of these maps
we obtain a map $$C_c^{\infty}(G) \hot V \hot C_c^{\infty}(G) \hot V^* \mapsto V \hot \tilde{V}.$$
We then have a map
\begin{equation}\label{smooth G x G}
\big( C_c^{\infty}(G) \hot C_c^{\infty}(G) \big) \hot \big( V \hot V^* \big) \mapsto V \hot \tilde{V}.
\end{equation}
By Corollary \ref{2 variables tensor inc}, we have the following inclusion:
\begin{equation}\label{2 variables tensor}
 C_c^{\infty}(G \times G) \subseteq C_c^{\infty}(G) \hot C_c^{\infty}(G) .
\end{equation}
Combining \ref{smooth G x G} and \ref{2 variables tensor}, we obtain
a non-zero morphism of $G \times G$-modules
$$A_{\pi}: C_c^{\infty}(G) \raw V \hot \tilde{V}. $$
Since $V \hot \tilde{V}$ is an irreducible $G \times G$-module - see \cite[p.289]{AG} - 
 we obtain that the map $A_\pi$ has a dense image. \newline \newline
\end{proof}

\begin{thm}\label{distributions on G}
Suppose that $$\distr(G)^{H \times P} = \distr(G)^{H \times H}. $$
Then Theorem \ref{main thm} holds.
\end{thm}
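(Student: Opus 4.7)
The plan is to use Bernstein's injection
$$A_\pi^* : V^* \hot \tilde V^* \hookrightarrow \distr(G),$$
obtained as the transpose of the map from Lemma \ref{Bernstein's lemma}, to transfer the question about $P$-invariant functionals on $V$ into a bi-invariance question for distributions on $G$, where the standing hypothesis can be applied directly. Given $L \in (V^*)^P$, the first step is to produce a partner for $L$ on the $\tilde V$-side of the tensor product. For this I would use that, for irreducible admissible representations in our category, the contragredient of an $H$-distinguished representation is again $H$-distinguished, so as to fix a nonzero $\tilde\ell \in (\tilde V^*)^H$.

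The second step is the transfer itself. I form $L \otimes \tilde\ell \in V^* \hot \tilde V^*$, which is invariant under the subgroup $P \times H$ of $G \times G$ acting via the duals of $\pi$ on the first factor and of $\tilde\pi$ on the second. Its image $T := A_\pi^*(L \otimes \tilde\ell) \in \distr(G)$ is then $P \times H$-invariant for the bi-translation action of $G \times G$ on $\distr(G)$. To match the hypothesis I would invoke the involution $g \mapsto g^{-1}$ of $G$, which interchanges left and right translation and therefore identifies $\distr(G)^{P \times H}$ with $\distr(G)^{H \times P}$ while preserving $\distr(G)^{H \times H}$; combined with the hypothesis, this yields $T \in \distr(G)^{H \times H}$.

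To finish, the injectivity and $G \times G$-equivariance of $A_\pi^*$ pull the $H \times H$-invariance of $T$ back to the $H \times H$-invariance of $L \otimes \tilde\ell$ in $V^* \hot \tilde V^*$. Picking $\tilde v_0 \in \tilde V$ with $\tilde\ell(\tilde v_0) \neq 0$ and evaluating the second tensor factor at $\tilde v_0$ extracts $L$ itself, showing $L \in (V^*)^H$; the reverse inclusion $(V^*)^H \subseteq (V^*)^P$ is immediate from $P \le H$, which completes Theorem \ref{main thm}. The main obstacle I anticipate is the first step, namely the distinguishedness of $\tilde\pi$; in this Fr\'echet setting it is not formal but relies on an irreducible $H$-distinguished admissible representation of $G$ being isomorphic to its complex conjugate together with the $\sigma$-stability of $H$, so a brief separate argument would be needed to justify it cleanly. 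A secondary but routine point is that the final "evaluate one factor" step uses standard properties of the completed projective tensor product.
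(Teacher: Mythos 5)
Your proposal is essentially the paper's own proof: take $\lambda\in(\tilde V^*)^H$ and $\mu\in(V^*)^P$, push $\mu\otimes\lambda$ into $\distr(G)$ via the injective $G\times G$-equivariant map $A_\pi^*$, apply the hypothesis, and pull the $H\times H$-invariance back by injectivity; for the distinguishedness of the contragredient the paper simply cites \cite[Theorem 2.4.2]{AGS}, which gives that $\tilde\pi$ is $H'$-distinguished with $H'=\{{}^t h^{-1}\,:\,h\in H\}=H$, so no separate conjugate-self-duality argument is needed. Your handling of the $P\times H$ versus $H\times P$ ordering via $g\mapsto g^{-1}$, and the explicit evaluation step extracting $\mu$ from $\mu\otimes\lambda$, are fine and if anything slightly more careful than the paper's wording.
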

\begin{proof}
The proof repeats almost verbatim \cite[p. 177]{O}.\newline
Let $(\pi,V)$ be an irreducible, admissible $H$-distinguished representation. Then the contragredient representation
$(\tilde{\pi},\tilde{V}) $ is also an irreducible, admissible $H$-distinguished representation. Indeed, by the theorem of
Aizenbud, Gourevitch and Sayag, \cite[Theorem 2.4.2]{AGS}, 
see also the theorem of Gelfand-Kazhdan in the $p$-adic case, \cite{GK}, the contragredient representation
is $H'$-distinguished, where $$H' = \{^th^{-1} \big| h \in H \}.$$
Since clearly $H' = H$, we get that the representation $(\tilde{\pi},\tilde{V}) $
is $H$-distinguished. \newline
Take two non-zero linear forms
$\lambda \in \big( \tilde{V}^* \big)^H$ and $\mu \in \big( V^*) ^P$.
Then, by Corollary \ref{2 variables tensor}, $$0 \neq A_{\pi}^*(\mu \otimes \lambda) \in \distr(G)^{P \times H}.$$
By the assumption, $A_{\pi}^*(\mu \otimes \lambda) \in \distr(G)^{H \times H}.$ Since $A_{\pi}^*$ is an injective
morphism of $G \times G$-modules, it follows that $\mu \otimes \lambda \in (V^* \hat{\otimes} \tilde{V}^*)^{H \times H}$.
Therefore $\mu \in (V^*)^H$.
\end{proof}
We see that in order to prove Theorem \ref{main thm} it is enough to prove that
 $$\distr(G)^{H \times P} = \distr(G)^{H \times H}.$$
We want to show that
\begin{equation}\label{first Frob} \distr(G)^{H \times P} \simeq \distr(H \backslash G)^{P} \simeq \distr(X)^{P},
\end{equation}
\begin{equation}\label{second Frob} \distr(G)^{H \times H} \simeq \distr(H \backslash G)^H \simeq \distr(X)^{H}.
\end{equation}
These isomorphisms follow from the next lemma.
\begin{lem}\label{Frobenius descent}
Let $Q$ be a Lie group and let $R$ be a closed Lie subgroup of $Q$. Then for any
subgroup $Q'<Q$ we have
\begin{equation}
\distr(R \backslash Q)^{Q'} \simeq \distr(Q)^{R \times Q'}.
\end{equation}
\end{lem}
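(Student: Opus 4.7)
The plan is to realize the claimed isomorphism by an explicit integration-along-fibers construction. Since $R$ is a closed Lie subgroup of the Lie group $Q$, the coset space $R\backslash Q$ carries a unique smooth manifold structure for which the projection $p:Q\to R\backslash Q$ is a smooth surjective submersion; in fact, $p$ is a principal $R$-bundle for the left $R$-action on $Q$.

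First I fix a left Haar measure $dr$ on $R$ and define the averaging map
\[
p_\ast : C_c^\infty(Q) \longrightarrow C_c^\infty(R\backslash Q), \qquad (p_\ast f)(Rq) = \int_R f(rq)\,dr.
\]
A standard partition-of-unity argument based on local trivializations of $p$ shows that $p_\ast$ is a continuous surjection. Moreover, because right translation by $Q'$ commutes with left translation by $R$, the map $p_\ast$ intertwines the right $Q'$-action on $Q$ with the induced right $Q'$-action on $R\backslash Q$.

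The transpose $p^\ast:\distr(R\backslash Q)\to\distr(Q)$, defined by $(p^\ast\xi)(f)=\xi(p_\ast f)$, is automatically injective (because $p_\ast$ is surjective), and left $R$-invariance of $dr$ forces its image to lie inside $\distr(Q)^R$. The main technical step is to prove that $p^\ast$ maps \emph{onto} $\distr(Q)^R$. I would argue this locally: over a trivializing open set $U\subset R\backslash Q$ one has an identification $C_c^\infty(p^{-1}(U))\cong C_c^\infty(R)\hot C_c^\infty(U)$, and uniqueness of Haar measure on $R$ forces any left $R$-invariant distribution on $R\times U$ to be of the form $dr\otimes\eta$ for some $\eta\in\distr(U)$; such a distribution is visibly a pullback under $p_\ast$. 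A partition-of-unity argument on $R\backslash Q$ then promotes this local statement to the global assertion that every element of $\distr(Q)^R$ is in the image of $p^\ast$.

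This establishes a natural isomorphism $p^\ast:\distr(R\backslash Q)\simeq\distr(Q)^R$ which is $Q'$-equivariant by construction. Passing to $Q'$-invariants on both sides yields $\distr(R\backslash Q)^{Q'}\simeq\distr(Q)^{R\times Q'}$, as required. The principal obstacle is the local-to-global identification of left $R$-invariant distributions with pullbacks; once that is granted, everything else is formal manipulation of transposes and invariants.
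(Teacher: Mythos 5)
Your route is genuinely different from the paper's (which obtains both sides as $\distr(R\backslash Q\times Q)^{Q\times Q'}$ by applying the quoted Frobenius descent theorem twice to the two projections of the auxiliary space $Y=R\backslash Q\times Q$), but as written it has a real gap at the Haar-measure step, and the gap matters because the statement in this generality is sensitive to unimodularity. First, for right cosets the formula $(p_\ast f)(Rq)=\int_R f(rq)\,dr$ is only independent of the representative $q$ if $dr$ is a \emph{right} Haar measure: replacing $q$ by $r_0q$ multiplies the integral by $\Delta_R(r_0)^{-1}$ when $dr$ is left Haar. Second, once you use the right Haar measure to make $p_\ast$ well defined, you get $p_\ast(f\circ L_{r_0})=\Delta_R(r_0)\,p_\ast f$, so $p^\ast\xi$ transforms under left translation by $R$ through the character $\Delta_R$ rather than being invariant; correspondingly, in a local trivialization the pullbacks are of the form $(\text{right Haar})\otimes\eta$ while the left-$R$-invariant distributions are $(\text{left Haar})\otimes\eta$, and the local identifications you want to glue by a partition of unity differ across charts by factors $\Delta_R(\sigma(u))$ coming from the transition functions. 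So the asserted equality $\mathrm{Im}(p^\ast)=\distr(Q)^R$ is simply false for non-unimodular $R$.

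This is not a removable technicality: with $Q=R=Q'$ a non-unimodular group (say the $ax+b$ group) the left side of the lemma is $\distr(\mathrm{pt})^{Q}=\CC$, while the right side consists of bi-invariant distributions on $Q$, which vanish since left Haar measure is not right invariant; so any correct proof must invoke some unimodularity (or insert a modular-character twist), and in your argument that hypothesis is exactly what is being used silently. The good news is that your strategy is sound precisely when $R$ is unimodular: then left and right Haar coincide, $p_\ast$ is well defined, $Q'$-equivariant and surjective, the local structure of $R$-invariant distributions plus a partition of unity on $R\backslash Q$ gives $\mathrm{Im}(p^\ast)=\distr(Q)^R$, and passing to $Q'$-invariants finishes the proof for an \emph{arbitrary} subgroup $Q'$ (unimodularity of $Q'$ or $Q$ is never needed). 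Since the paper only applies the lemma with $R=H=GL_n(\RR)$, which is unimodular, your corrected argument covers everything that is actually used, and it has the merit of making visible where the hypothesis enters, whereas the paper's double-descent proof delegates these measure-theoretic issues to the cited theorem of [AGS]. State the unimodularity assumption on $R$, switch to the right Haar measure, and spell out the local step (left-invariant distributions on $R\times U$ are $\text{Haar}\otimes\distr(U)$, via uniqueness of Haar measure among invariant distributions and density of $C_c^\infty(R)\otimes C_c^\infty(U)$), and your proof is complete.
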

\begin{proof}
We prove this by applying the Frobenius descent twice. Let $$Y = R \backslash Q \times Q.$$
The group $Q \times Q'$ acts on $Y$ by $$\rho_1(q,q') \cdot (x,x') := (xq^{-1},qx'q'^{-1}).$$
Take $Z = R \backslash Q$ with $Q \times Q'$ acting on $Z$ by $$\rho_2(q,q') \cdot x := xq^{-1}. $$
This action is transitive and we have a commutative diagram
%%% First commutative diagram
\begin{equation}
\begin{CD}
Y @> \rho_1 >> Y\\
@VV \phi V @VV \phi V\\
Z @> \rho_2 >> Z
\end{CD}
,
\end{equation}
where $\phi(x,q') = x$ is the projection onto the first coordinate. \newline
Let $x = Re_Q \in Z$ be the identity coset.
The fiber above $Re_Q$ is $$\phi^{-1}(x) = \big\{x \big\} \times Q \simeq Q$$ and
the stabilizer is $(Q \times Q')_x = R \times Q'$. We thus obtain
\begin{equation}\label{first frobenius}
\distr(Y)^{Q \times Q'} \simeq \distr(Q)^{R \times Q'}.
\end{equation}
The group $Q \times Q'$ acts on the group $W=Q$ by $$\rho_3(q,q') \cdot x := qxq'^{-1}. $$
This action is transitive and we have a commutative diagram
%% Second commutative diagram
\begin{equation}
\begin{CD}
Y @> \rho_1 >> Y\\
@VV \psi V @VV \psi V\\
W @> \rho_3 >> W
\end{CD}
,
\end{equation}
where $\psi(x,q') = q'$ is the projection onto the second coordinate. \newline
The stabilizer of $x=e_Q \in W$ is $(Q \times Q')_x = \big\{(q,q') \in Q \times Q' \big| q=q' \big\} \simeq Q'$,
and the fiber above $x$ is $$\psi^{-1}(x) = R \backslash Q \times x \simeq R \backslash Q.$$
We have that
\begin{equation}\label{second frobenius}
\distr(Y)^{Q \times Q'} \simeq \distr(R\backslash Q)^{Q'}.
\end{equation}
Combining (\ref{first frobenius}) and (\ref{second frobenius}), the result follows.
\end{proof}
Note that $H$ is indeed a closed subgroup of $G$. Thus, isomorphims in (\ref{first Frob}) follow from Lemma
\ref{Frobenius descent} with
$Q = G, R = H,Q' = P $ and isomorphims (\ref{second Frob}) follow from this lemma with
$Q=G, R = H, Q' = H$.
To summarise, the main theorem follows by implying (\ref{first Frob}) and (\ref{second Frob})
to Proposition \ref{main prop}.
\end{section}
% \ref{distributions on G} and isomorphisms \ref{first Frob},  \ref{second Frob} imply
%that the main theorem \ref{main thm} follows from proposition \ref{main prop}.
\begin{section}{Proof of the main proposition \ref{invariance to transpose}}
Before we begin the proof we shall introduce some notation.\newline
Let $$\xx = \bigg\{ x \in M_n(\CC): x+\bar{x} = 0 \bigg\} = i\cdot M_n(\RR). $$
Thus, $x \mapsto ix$ is an isomorphism $M_n(\RR) \simeq \xx $ of $H$-spaces, where $H$ acts on both
$M_n(\RR)$ and $\xx $
by conjugation. \newline
Let $\theta: \xx \rightarrow \RR$, $$\theta(x) = det(I_n+x)det(I_n-x) = det(I_n+x)\overline{det(I_n+x)}.$$
Clearly, $\theta$ is a continuous $H$-invariant map. The proof of Proposition \ref{invariance to transpose}
is given in two steps.
We begin with the following linearization argument (the idea appeared in \cite[p. 1511]{AGS}).
\begin{lem}
Let $$\xx_0 = \theta^{-1}(\RR^*) = \bigg\{ x \in \xx \big| \theta(x) \neq 0 \bigg\}.$$
Then $\smallGL$ acts on $\xx_0$, and any $\xi \in \distr(\xx_0)^{\smallGL} $ is invariant
under the transposition.
\end{lem}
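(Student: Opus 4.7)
The plan is to reduce the assertion to the Aizenbud--Gourevitch theorem (Theorem \ref{Ais-Gur thm}) applied with $n$ replaced by $n-1$. By the discussion following that theorem, this shifted form yields the vanishing
$$\distr(M_n(\RR))^{\GLtransp,\chi} = 0.$$
The multiplication-by-$(-i)$ map $\xx \to M_n(\RR)$ is an isomorphism of $\GLtransp$-spaces (it commutes with $\smallGL$-conjugation and with transposition), and under this identification $\xx_0$ corresponds to the open, $\GLtransp$-invariant subset $\{y \in M_n(\RR) : \det(I_n+iy)\det(I_n-iy) \neq 0\}$.

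Given $\xi \in \distr(\xx_0)^{\smallGL}$, I will form $\eta := \xi - {}^t\xi$, aiming to show $\eta = 0$. For the $\smallGL$-invariance of ${}^t\xi$ (and hence of $\eta$), one uses that conjugation by $g$ on $\xx$ followed by transposition equals transposition followed by conjugation by ${}^tg^{-1}$, together with the stability of $\smallGL$ under $g \mapsto {}^tg^{-1}$. Anti-invariance ${}^t\eta = -\eta$ is immediate from the definition. Thus $\eta \in \distr(\xx_0)^{\GLtransp,\chi}$, and the task reduces to showing this space is trivial.

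The remaining step, which I expect to be the main technical obstacle, is to pass from the shifted Aizenbud--Gourevitch vanishing on the full linear space $M_n(\RR)$ to the vanishing on the open invariant subset $\xx_0$. The key feature is that the polynomial $\theta$ whose zero locus is $\xx \setminus \xx_0$ is itself $GL_n(\RR)$-invariant and transposition-invariant, hence $\GLtransp$-invariant. My plan is to extend $\eta$ to a $\chi$-equivariant distribution $\tilde\eta$ on all of $\xx$ via Bernstein's meromorphic continuation: consider the family $|\theta|^s \cdot \eta$ on $\xx_0$, which for $\mathrm{Re}(s)$ sufficiently large extends by zero to a distribution on $\xx$, then continue meromorphically in $s$ and extract the value at $s = 0$. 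Because both $\theta$ and $\eta$ are equivariant, each member of the family, and therefore $\tilde\eta$, remains $\chi$-equivariant. The shifted Aizenbud--Gourevitch vanishing then forces $\tilde\eta = 0$, so $\eta = 0$ on $\xx_0$ and $\xi = {}^t\xi$.
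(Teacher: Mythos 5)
Your reduction (pass to $\eta=\xi-{}^t\xi$, check $\eta\in\distr(\xx_0)^{\GLtransp,\chi}$, identify $\xx\simeq M_n(\RR)$ and invoke the shifted Aizenbud--Gourevitch vanishing $\distr(\xx)^{\GLtransp,\chi}=0$) is exactly the paper's first step. The gap is in the step you yourself flag as the main obstacle: the claim that for an \emph{arbitrary} $\eta\in\distr(\xx_0)$ the family $|\theta|^s\cdot\eta$ extends by zero to $\xx$ for $\mathrm{Re}(s)$ large and then continues meromorphically in $s$. There is no such theorem. Bernstein's meromorphic continuation (via the Bernstein--Sato functional equation) concerns the distribution-valued family $s\mapsto|\theta|^s$ paired with fixed test functions, or $|\theta|^s$ times distributions with controlled growth; it does not apply to $|\theta|^s$ times a general distribution supported on the complement of the zero locus. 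Indeed, a distribution on $\xx_0$ may blow up faster than any power of $\theta$ near $\theta^{-1}(0)$: already on $\RR$ with $\theta(x)=x$, the distribution $\eta=\sum_k e^{e^k}\delta_{1/k}$ on $\{x\neq 0\}$ has the property that $|x|^s\eta$ admits no extension to $\RR$ for any $s$ (any extension would be of finite order on a neighborhood of $0$, which the coefficients violate). So the proposed extension $\tilde\eta$ need not exist, and the argument does not close. (There is also a secondary issue: even when such a continuation exists, $s=0$ may be a pole, and one must take the $s^0$ Laurent coefficient to recover a distribution restricting to $\eta$ on $\xx_0$.)

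The paper's resolution is more elementary and avoids all growth issues: assuming $0\neq\eta\in\distr(\xx_0)^{\GLtransp,\chi}$, pick $p\in\mathrm{Supp}(\eta)$ and set $t=\theta(p)\neq 0$; choose $f\in\mathcal{S}(\RR)$ vanishing on a neighborhood of $0$ with $f(t)\neq 0$, and replace $\eta$ by $\eta':=(f\circ\theta)\cdot\eta$. Since $\theta$ is $GL_{n-1}(\RR)$-invariant and transposition-invariant, $\eta'$ is still $(\GLtransp,\chi)$-equivariant; it is nonzero because $f\circ\theta$ is nonvanishing near $p$; and its support lies in $\theta^{-1}(\mathrm{supp}\,f)$, which is closed in $\xx$ and contained in $\xx_0$, so $\eta'$ extends by zero to an element of $\distr(\xx)^{\GLtransp,\chi}=0$, a contradiction. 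In other words, the correct use of the invariant function $\theta$ is through a scalar cutoff $f\circ\theta$ that kills a neighborhood of the boundary (making extension by zero trivial), not through the family $|\theta|^s$; with that replacement your outline becomes the paper's proof.
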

\begin{proof}
The proof is an adaptation of Proposition 3.2.3 from [AGS] %, page 1511
to our case. The idea is to
take a distribution on $\xx_0$ and "extend" it to a distribution on $\xx$. By Theorem
 \ref{Ais-Gur thm} we have
 that any $\xi \in \distr(\xx)^{\smallGL} $ is invariant under the transposition.
Hence, we have $\distr(\xx)^{\GLtransp,\chi} = 0. $ We need to prove that
$\distr(\xx_0)^{\GLtransp,\chi} = 0. $ \newline
Assume the contrary, and let $ 0 \neq \xi \in \distr(\xx_0)^{\GLtransp,\chi}$. Take $p \in \text{Supp}(\xi)$
and let $t = \theta(p)$. Note that $t \neq 0$. Let $f \in \mathcal{S}(\RR)$ be such that $f$ vanishes in a
neighborhood of zero and such that $f(t) \neq 0$. Consider
%the multiplication of the distribution $\xi$ by the smooth function
%$f \circ \theta$, i.e.,
$\xi' := (f \circ \theta) \cdot \xi, $
i.e., $$\xi'(h) = \xi(h \cdot (f \circ \theta))$$
for all $h \in C_c^{\infty}(\xx_0)$.
Then $\xi' \in \distr(\xx_0)^{\GLtransp,\chi}$. However, we can extend $\xi'$ "by zero" to
$$\xi'' \in \distr(\xx)^{\GLtransp,\chi} = 0.$$ Therefore $\xi' = 0$, and we obtained a contradiction!
\end{proof}
Next, define a covering of $X$ by open $H$-subspaces as follows. Note that if
$\lambda \in \CC^*$ is an eigenvalue of $x \in X$ then $\lambda \overline{\lambda} = 1$.
For every such $\lambda$ let
$$X_{\lambda} =  \bigg\{ x \in X \big| det(x-\lambda I_n) \neq 0 \bigg\}.$$
Note that $X - X_{\lambda}$ consists of all the matrices in $X$ with $\lambda$ as an eigenvalue.
Since every matrix $x \in X$ has at most $n$ different eigenvalues we obtain a finite covering
of $X$ by open $H$-subspaces
$$X = \bigcup_{i=1}^{n+1} X_{\lambda_i}, $$
where $\lambda_1, ..., \lambda_{n+1}$ are any $n+1$ distinct chosen numbers such that $\lambda_j
\overline{\lambda_j} = 1$.
\newline Next, we have an isomorphism of $H$-spaces $\xx_0$ and $X_{\lambda}$.
The isomorphism is given by a Cayley transform  $$\xi_{\lambda}(x) = (x+\lambda I_n)(x-\lambda I_n)^{-1}. $$
The proof that this is indeed an isomorphism is given in [O], pp. 181-182.
Our purpose is to prove that  every distribution $\xi \in \distr(X)$ which is $\smallGL$-invariant is
invariant under the transposition. This is equivalent to
\begin{equation}\label{distr equality}
\distr(X)^{\GLtransp,\chi} = 0 .
\end{equation}
The last equation now follows from the next simple lemma, which is a special case of \cite[Proposition 2.2.6]{AG}.
\begin{lem}
Assume that a Lie group $G$ acts on a smooth manifold $X$. Let
$$X= \bigcup_{i=1}^{k} A_i $$
be a finite open covering, where $A_i$ are $G$-invariant open subsets. Suppose $\distr(A_i)^{G,\chi} = 0$ for all $i=1,...,k$.
Then $$\distr(X)^{G,\chi} = 0.$$
\end{lem}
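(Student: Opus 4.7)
My proof plan is to exploit the sheaf property of distributions via a smooth partition of unity subordinate to the finite open cover $\{A_i\}$, thereby reducing the vanishing of a given $\xi \in \distr(X)^{G,\chi}$ to the vanishing of each of its restrictions $\xi|_{A_i}$.

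The first step will be restriction. For each $i$, extension by zero gives a continuous inclusion $C_c^{\infty}(A_i) \hookrightarrow C_c^{\infty}(X)$ whose transpose yields a restriction map $\distr(X) \raw \distr(A_i)$. Because $A_i$ is $G$-invariant, extension by zero commutes with the $G$-action on test functions, so the transposed restriction map is $G$-equivariant and carries the character $\chi$ along unchanged. Hence the restriction $\xi|_{A_i}$ lies in $\distr(A_i)^{G,\chi}$, which by hypothesis is the zero space. We conclude $\xi|_{A_i}=0$ for every $i$.

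The second step will be reassembly. Since $X$ is a smooth manifold and $\{A_i\}_{i=1}^{k}$ is a finite open cover, one can construct in the usual way a smooth partition of unity $\{\phi_i\}_{i=1}^{k}$ with $supp(\phi_i)\subseteq A_i$ and $\sum_{i=1}^{k}\phi_i\equiv 1$ on $X$. For any test function $f\in C_c^{\infty}(X)$, each product $\phi_i f$ has compact support contained in $A_i$, so $\phi_i f \in C_c^{\infty}(A_i)$, and we compute
$$\xi(f) \;=\; \sum_{i=1}^{k} \xi(\phi_i f) \;=\; \sum_{i=1}^{k}\bigl(\xi|_{A_i}\bigr)(\phi_i f) \;=\; 0.$$
As $f$ was arbitrary, $\xi=0$, which is the desired conclusion.

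I do not anticipate any serious obstacle. The lemma is purely formal: its content is the sheaf property of distributions together with the observation that extension by zero is $G$-equivariant on a $G$-invariant open subset. The only point that deserves a moment of care is the identification $\xi(\phi_i f) = (\xi|_{A_i})(\phi_i f)$, but this is simply the definition of the restriction of a distribution to an open subset, and nothing beyond compactness of $supp(f)$ is required.
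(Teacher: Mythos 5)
Your argument is correct. The restriction step is sound: since each $A_i$ is open and $G$-invariant, extension by zero $C_c^{\infty}(A_i)\hookrightarrow C_c^{\infty}(X)$ intertwines the $G$-actions, so $\xi|_{A_i}\in\distr(A_i)^{G,\chi}=0$; and the reassembly step via a smooth partition of unity $\{\phi_i\}$ subordinate to the finite cover (which exists on any smooth manifold, and which need not be $G$-invariant, since equivariance is no longer needed at that stage) correctly gives $\xi(f)=\sum_i \xi|_{A_i}(\phi_i f)=0$. The only difference from the paper is that the paper offers no proof at all: it dismisses the statement as a special case of \cite[Proposition 2.2.6]{AG}. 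Your partition-of-unity argument is essentially the standard proof of that cited result (the sheaf property of distributions, i.e.\ injectivity of the joint restriction map for an open cover, combined with equivariance of restriction to invariant open sets), so what you have written is a self-contained elementary substitute for the citation rather than a genuinely different method; its value is precisely that it makes the lemma independent of the reference, and it even works without the finiteness of the cover provided one uses a locally finite refinement.
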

%Conclusion: we proved theorem \ref{main thm} by
This completes the proof of Proposition \ref{main prop} and therefore the proof of Theorem \ref{main thm}.
\end{section}

\end{document}